\tikzstyle arrowstyle=[scale=1]
\theoremstyle{plain}
\newtheorem*{theorem*}{Theorem}
\newtheorem{theorem}{Theorem}[section]
\newtheorem{lemma}[theorem]{Lemma}
\newtheorem{proposition}[theorem]{Proposition}
\newtheorem*{claim*}{Claim}
\newtheorem{conjecture}[theorem]{Conjecture}
\theoremstyle{remark}
\def\N{\mathbb{N}}
\def\C{\mathcal}
\def\SS{10^{25}}
\def\FF[#1]{2^{k-1}}
\let\originalleft\left
\let\originalright\right
\renewcommand{\left}{\mathopen{}\mathclose\bgroup\originalleft}
\renewcommand{\right}{\aftergroup\egroup\originalright}
\def\imod#1{\allowbreak\mkern10mu({\operator@font mod}\,\,#1)}
\begin{document}

\title{Catching a fast robber on the grid}

\author{Paul Balister}
\address{Department of Mathematical Sciences, University of Memphis, Memphis TN 38152, USA}
\email{pbalistr@memphis.edu}

\author{B\'{e}la Bollob\'{a}s}
\address{Department of Pure Mathematics and Mathematical Statistics, University of Cambridge, Wilberforce Road, Cambridge CB3\thinspace0WB, UK, {\em and\/}
Department of Mathematical Sciences, University of Memphis, Memphis TN 38152, USA, {\em and\/} London Institute for Mathematical Sciences, 35a South St., Mayfair, London W1K\thinspace2XF, UK}
\email{b.bollobas@dpmms.cam.ac.uk}

\author{Bhargav Narayanan}
\address{Department of Pure Mathematics and Mathematical Statistics, University of Cambridge, Wilberforce Road, Cambridge CB3\thinspace0WB, UK}
\email{b.p.narayanan@dpmms.cam.ac.uk}

\author{Amy Shaw}
\address{Department of Mathematical Sciences, University of Memphis, Memphis TN 38152, USA}
\email{amy.shaw@memphis.edu}

\date{12 April 2015}
\subjclass[2010]{Primary 05C57; Secondary 05C35}

\begin{abstract}
We study the problem of cops and robbers on the grid where the robber is allowed to move faster than the cops. It is a well-known fact that two cops are necessary and sufficient to catch the robber on any finite grid when the robber has unit speed. Here, we prove that if the speed of the robber exceeds a sufficiently large absolute constant, then the number of cops needed to catch the robber on an $n \times n$ grid is $\exp( \Omega(\log n / \log \log n))$.
\end{abstract}

\maketitle

\section{Introduction}
The game of \emph{Cops and Robbers}, introduced almost thirty years ago independently by Nowakowski and Winkler~\citep{intro_cop1} and Quilliot~\citep{intro_cop2}, is a perfect information pursuit-evasion game played on an undirected graph $G$ as follows. There are two players, a set of cops and one robber. The game begins with the cops being placed onto vertices of their choice in $G$ and then the robber, being fully aware of the placement of the cops, positions himself at a vertex of his choosing. Afterwards, they move alternately, first the cops and then the robber along the edges of the graph $G$.  In the cops' turn, each cop may move to an adjacent vertex, or remain where he is, and similarly for the robber; also, multiple cops are allowed to occupy the same vertex. The cops win if at some time there is a cop at the same vertex as the robber; otherwise, the robber wins. The minimum number of cops for which the cops have a winning strategy, no matter how the robber plays, is called the \emph{cop number} of $G$. 

Perhaps the most well-known problem concerning the game of cops and robbers is Meyniel's conjecture which asserts that $O(\sqrt{n})$ cops are sufficient to catch the robber on any $n$-vertex graph. While Meyniel's conjecture has attracted a great deal of attention, progress towards the conjecture in its full generality has been rather slow; see~\citep{meyniel_survey} for a broad overview, and~\citep{best1, best2} for the state of the art.

In this note, we shall be concerned with a variant of the question where the robber is allowed to move faster than the cops. Let us suppose that the cops move normally as before while the robber is allowed to move at speed $R \in \N$; in other words, the robber may, on his turn, take any walk of length at most $R$ from his current position that does not pass through a vertex occupied by a cop. The definition of the cop number in this setting is analogous. This variant was originally considered by Fomin, Golovach, Kratochv{\'{\i}}l, Nisse and Suchan~\citep{fast_rob1} and following them, Frieze, Krivelevich and Loh~\citep{fast_rob2}, Mehrabian~\citep{fast_rob3}, and Alon and Mehrabian~\citep{fast_rob4} have obtained  results about how large the cop number of an $n$-vertex graph can be when the robber has a fixed speed $R > 1$.

It is natural to ask how the cop number of a given graph changes, if at all, when the speed of the robber increases from $1$ to some $R > 1$. The most natural example of a graph where this question is interesting is the $n \times n$ grid of squares where two squares of the grid are adjacent if and only if they share an edge. Let us write $f_R(n)$ for the minimum number of cops needed to catch a robber of speed $R$ on an $n \times n$ grid. Maamoun and Meyniel~\citep{fast_rob0} showed, amongst other things, that $f_1 (n) = 2$ for all $n \ge 2$. However, the flavour of the problem changes completely as soon as the robber is allowed to move faster than the cops. Nisse and Suchan~\citep{fast_grid} showed that $f_2(n) = \Omega (\sqrt{\log n})$. Our aim in this note is to prove the following extension.

\begin{theorem}\label{t:main}
There exists an $R \in \N$ and a $c_R > 0$ such that for all sufficiently large $n \in \N$, we have
\[f_R (n) \ge \exp\left( \frac{c_R \log n} {\log \log n}\right).\]
\end{theorem}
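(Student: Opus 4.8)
The plan is to exhibit a strategy allowing the robber to evade any set of fewer than $\exp(c_R\log n/\log\log n)$ cops indefinitely, organised around a hierarchical decomposition of the grid. Fix the speed $R$ to be a large absolute constant, set $L=\lceil(\log n)^2\rceil$, and let $d=\lfloor\log n/\log L\rfloor=\Theta(\log n/\log\log n)$. I would tile the $n\times n$ grid by a nested family of square \emph{regions}: a single level-$0$ region (the whole grid) and, recursively, each level-$i$ region of side $n_i=n/L^i$ partitioned into an $L\times L$ array of level-$(i+1)$ regions of side $n_{i+1}=n_i/L$. The key device is a potential recording, for the region currently containing the robber, the number of cops lying within a bounded multiple of its side length; I will maintain the invariant that while the robber occupies a level-$i$ region this count is at most $C/a^i$, where $C$ is the total number of cops and $a>1$ is an absolute constant to be extracted from the local analysis below.

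The heart of the argument is a \emph{local lemma}: if the robber sits in a region $Q$ of side $m$ with at most $N$ cops within distance $O(m)$ of $Q$, and $N$ is below a threshold depending only on $R$ and $a$, then the robber can either avoid capture inside $Q$ forever or, at a moment of his choosing, relocate to a child region $Q'\subset Q$ one level deeper that contains at most $N/a$ of those cops, without being intercepted during the transit. Granting this, the invariant is preserved from level $i$ to level $i+1$: among the $L^2$ children of the current region a pigeonhole argument produces one holding at most a $1/a$ fraction of the relevant cops (in fact far fewer, but a factor $a$ is all I can safely cash in), and the robber flees there. Iterating, after $d$ descents the robber reaches a level-$d$ region of side $n_d=n/L^d\ge1$ while the number of cops that can threaten him has fallen below $C/a^d$; if $C<a^d$ this is less than $1$, so he is never approached and survives. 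Hence $f_R(n)\ge a^d=\exp(d\log a)$, and since $d=\Theta(\log n/\log\log n)$ this is $\exp(\Omega(\log n/\log\log n))$, giving Theorem~\ref{t:main} for a suitable $c_R>0$.

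The delicate point, and the main obstacle, is the \emph{transit} guarantee in the local lemma. A naive division into blocks would suggest that the robber could exploit all $L^2$ children and force the far stronger recursion $f_R(n)\gtrsim L^2 f_R(n/L)$, hence a polynomial bound; the reason this fails, and the reason only a constant factor $a$ can be recovered per level together with a polylogarithmic loss $L$ in scale, is that the cops can defend the passages between children. While the robber dashes across $Q$ to reach an under-defended child, an operation taking about $m/R$ turns, the cops move about $m/R$ steps and can reseal a constant-fraction neighbourhood; the large but fixed speed $R$ is exactly what lets the robber open enough of a lead to slip into a child before it is sealed, but the margin is only guaranteed when the child is smaller than its parent by the full factor $L=\mathrm{polylog}(n)$. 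Establishing this quantitatively requires tracking, over the $\Theta(m/R)$ turns of a transit, the set of cells a fast robber can reach while staying clear of every cop, and showing it still meets the interior of some sparsely guarded child; I would do this by a reachability and area estimate, using $R\gg1$ to dominate the cops' displacement and the size gap $L$ to absorb the accumulated error.

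Finally I would verify the bookkeeping: that the per-level potential losses are genuinely multiplicative, so that no additive error term creeps in to swamp the $C/a^i$ budget after many levels; that the regions do not degenerate before level $d$, which is guaranteed by $n_d\ge1$, i.e.\ $d\le\log_L n$; and that the robber's promised safe position at the bottom assembles into a global winning strategy rather than a mere one-step escape, by re-running the local lemma whenever cops re-encroach. The base of the hierarchy and the admissible values of $R$ and $a$ are pinned down by the local lemma, and combining these yields Theorem~\ref{t:main}.
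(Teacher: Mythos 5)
Your hierarchical set-up has the right quantitative shape --- a tree of depth $\Theta(\log n/\log\log n)$ with a constant-factor gain in cop density per level would indeed yield the stated bound --- but the global strategy is not viable, and the one lemma that carries all the weight is left unproven. The structural problem is that your robber only ever \emph{descends}: after at most $d$ descents he is confined to a level-$d$ region of bounded side, and the invariant you maintain (fewer than $C/a^d<1$ cops within $O(n_d)$ of that region \emph{at the moment of arrival}) is not a permanent guarantee, because the remaining cops are free to walk over and flood that neighbourhood afterwards. ``Re-running the local lemma whenever cops re-encroach'' only triggers another descent, and descents run out after $d$ levels, at which point the robber is cornered in a small region with no further move available in your framework. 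A winning robber strategy must be perpetual motion: in the paper the robber cycles forever around a $2\times2$ array of top-level $k$-cells, whose safety is automatic and permanent because the total number of cops is below the threshold $2^k$ defining safety at level $k$. Correspondingly, the paper's key lemma (Lemma~\ref{l:main}) is about moving \emph{laterally} from a safe $k$-cell to an adjacent safe $k$-cell, with the induction running over the sub-cells traversed en route; nothing in your scheme lets the robber leave a region once he has committed to it.

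Second, your ``local lemma'' is exactly where the entire difficulty of the theorem lives, and you flag the transit guarantee as the main obstacle without supplying a mechanism. The paper's resolution is a detour argument: a $k$-cell is safe if fewer than $2^k$ cops lie within a time-horizon $T_k$ matched to the transit time, so inside a safe $k$-cell at most one of any pair of separated $(k-1)$-cells can be unsafe (Proposition~\ref{disjoint}); hence the robber detours around at most one blocked sub-cell per crossing, and a marked-cop argument shows the blockage cannot recur because the marked cops can never overtake him vertically. The time overhead of the detours is controlled by taking the branching factor at level $k$ to be $(2k+1)^2$, so that the accumulated multiplicative loss $\prod_j\bigl(1+C/(2j+1)^2\bigr)$ converges; this is the recursion for the transit time that your sketch never sets up (though your constant branching factor $L=(\log n)^2$ with $d\ll L$ could in principle support an analogous bound). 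A ``reachability and area estimate'' is not a substitute: the issue is not the area the robber can cover in one turn but whether a child that was under-defended when he planned his route is still under-defended when he arrives, over a transit lasting many turns, and that requires a safety notion with a built-in time horizon and a detour rule, not a one-shot pigeonhole.
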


To keep the presentation simple, we shall make no attempt to optimise the speed of the robber; we prove Theorem~\ref{t:main} with $R = \SS$. 

Note that $f_R (n) \le n$ for every $R \in \N$ since $n$ cops can catch a robber of any speed on the $n \times n$ grid by lining up on the bottom edge of the grid and then marching upwards together. We suspect that this trivial upper bound is closer to the truth than Theorem~\ref{t:main}; we conjecture the following.

\begin{conjecture}
For all sufficiently large $R \in \N$, $f_R (n) = n^{1 - o(1)}$ as $n \to \infty$.
\end{conjecture}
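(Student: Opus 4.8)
The plan is to prove the conjecture, namely that for all sufficiently large $R$ we have $f_R(n) = n^{1-o(1)}$, by exhibiting a robber strategy that defeats any set of $n^{1-\eps}$ cops for every fixed $\eps > 0$. The overall philosophy is the same as in the subpolynomial bound: force the robber's escape through a self-similar, recursively nested structure so that the cop requirement compounds across scales. The difference is that to reach a \emph{near-linear} bound rather than merely a superpolynomial one, the recursion must lose essentially \emph{no} efficiency per level. I would therefore set up a multiscale partition of the $n \times n$ grid into roughly $m \times m$ subgrids, each of side $n/m$, and argue that in order to confine the robber the cops must, simultaneously and at every scale, maintain a near-complete \emph{guarding} of the boundaries between blocks. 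The central quantitative goal is to show that the number of cops needed to block the robber at scale $s$ is at least a $(1-o(1))$ fraction of the number of boundary segments at that scale, so that summing (or rather taking the dominant term) over $\log n / \log m$ scales yields a count of order $n^{1-o(1)}$.

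The key steps, in order, would be as follows. First I would establish a sharp \emph{local blocking lemma}: on a single $k \times k$ block with the robber of speed $R = \SS$ starting somewhere inside, if fewer than $(1-\delta)k$ cops are deployed on or near the block boundary, then the robber can reach and cross some boundary edge into an adjacent block within a bounded number of moves. The speed $R$ being a large absolute constant is what makes this possible: the robber can traverse any corridor of width up to $R$ instantaneously relative to the cops, so a sub-linear cop wall necessarily leaves a gap of width at least one that the robber exploits. Second, I would set up the \emph{potential/accounting argument} that upgrades this local statement to a global one: define the robber's ``available territory'' as the connected region of uncaptured cells reachable by fast walks, and show that each cop, over the course of the game, can be ``charged'' to at most a bounded stretch of boundary at a single scale. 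Since the total boundary length over all relevant scales is $\Theta(n \cdot \log n / \log m)$ but each scale individually contributes $\Theta(n)$, a counting argument at the single \emph{busiest} scale already forces $\Omega(n / \text{polylog})$ cops; refining the blocking lemma's constant $\delta \to 0$ as $n \to \infty$ (by letting the block size $n/m$ grow slowly, say $m = n^{o(1)}$) pushes the bound to $n^{1-o(1)}$. Third, I would verify that the robber can consistently choose which block to flee into so as to maintain, by an infinite-descent or invariant argument, a block of side at least $n^{1-o(1)}$ that is genuinely under-guarded; this is where the recursion must be assembled into a single coherent evasion strategy rather than a one-shot escape.

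The hard part will be the \emph{sharpness} of the blocking constant, i.e.\ proving that the robber escapes unless the cops commit a $(1-o(1))$ fraction of a full linear wall, uniformly and simultaneously across all scales, rather than merely unless they commit a constant fraction. A constant-fraction blocking lemma only reproduces the superpolynomial bound; obtaining $n^{1-o(1)}$ demands that the inefficiency per scale vanish. The genuine obstacle is that the cops are not forced to defend all scales at once---they may concentrate their force at one scale and concede the others---so the accounting argument must rule out the possibility that a cop ``reused'' across time and across scales defeats multiple robber threats with total cost far below the naive sum. Controlling this reuse rigorously, and in particular showing that the robber can present the cops with $n^{1-o(1)}$ \emph{spatially disjoint and temporally unavoidable} threats that no sub-$n^{1-o(1)}$ cop set can cover, is the crux; I expect this to require either a carefully randomised robber strategy against which one bounds the expected capture time, or a clever monovariant (such as the area of the robber's reachable region) that provably decreases too slowly under any sub-near-linear cop deployment.
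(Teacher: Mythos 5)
First, a point of order: the statement you were asked to prove is not a theorem of the paper at all --- it is stated as a \emph{conjecture}, and the paper offers no proof of it. The paper proves only the much weaker bound $f_R(n) \ge \exp(c_R \log n / \log\log n)$, and its concluding section explicitly treats even a polynomial lower bound $f_R(n) \ge n^{c_R}$ as beyond the present method, suggesting it would require adapting the known solutions of Conway's angel problem in two dimensions. So there is no paper proof to compare yours against; the only question is whether your proposal constitutes a proof on its own, and it does not.

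The gaps in your proposal are genuine and sit exactly where the open difficulty lies. Your first step, the ``sharp local blocking lemma'' asserting that fewer than $(1-\delta)k$ cops near the boundary of a $k \times k$ block cannot prevent the robber from crossing, is not established by the observation that a sub-linear cop wall ``necessarily leaves a gap of width at least one that the robber exploits'': cops are mobile, need not form a static wall, and can concentrate dynamically at whichever gap the robber runs toward. This is precisely why two cops suffice against a speed-$1$ robber on the whole grid (Maamoun--Meyniel), and why the best known lower bound for $f_2$ is only $\Omega(\sqrt{\log n})$ --- cop mobility compensates enormously for sparse coverage, and quantifying the extent to which a fast robber can defeat that mobility is the entire content of the conjecture, not a lemma one may assume. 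In effect, your step one presupposes, at a single scale, a statement essentially equivalent to what is to be proven. Your second and third steps --- the charging argument ruling out cops being ``reused'' across scales and across time, and the assembly of the local escapes into a coherent indefinite evasion strategy --- are the other half of the crux, and you yourself flag them as unresolved, to be handled by ``either a carefully randomised robber strategy or a clever monovariant.'' An outline whose load-bearing steps are all open is a research programme, not a proof; as written, the proposal neither proves the conjecture nor even re-derives the $\exp(\Omega(\log n / \log\log n))$ bound that the paper actually establishes.
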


We give a sketch of the proof of Theorem~\ref{t:main} and then the proof proper in Section~\ref{s:proof}. We also describe a modest improvement of the trivial upper bound in Section~\ref{s:upper}. We conclude with some discussion in Section~\ref{s:end}.

\section{Proof of the main result}\label{s:proof}
Our proof of Theorem~\ref{t:main} is inspired by the strategy used by Bollob\'as and Leader~\citep{angel3d} and Kutz~\citep{kutz} to resolve Conway's angel problem in three dimensions.

We fix a large positive integer $R \in \N$ which will denote the speed of the robber in what follows. We also fix two other positive integers $C, N \in \N$ such that $C$, $N$ and $R$ together satisfy $C \ge 40$, $N > 100e^{C}$ and $R > 50N$. We may, for example, take $C = 40$, $N = 10^{20}$ and $R = \SS$.

Define a sequence of grids as follows: let $\C{A}_0$ be an $N \times N$ grid and for $k\ge1$ let $\C{A}_k$ be a $(2k+1)^2 \times (2k+1)^2$ array of copies of $\C{A}_{k-1}$. 

We shall imagine that our $n \times n$ grid is tiled with copies of $\C{A}_0$, with these copies of $\C{A}_0$ themselves fitting together to form copies of $\C{A}_1$, and so on. We call each copy of $\C{A}_k$ in our grid a \emph{$k$-cell}. Our strategy for the robber will be inductive: we shall describe how the robber may run from $k$-cell to adjacent $k$-cell, the path of the robber within a $k$-cell being inductively determined, all the while avoiding $k$-cells where there are too many cops.

Let us suppose that the robber is situated on the bottom edge of a `safe' $k$-cell and wishes to get to the bottom edge of the $k$-cell above. Assume for the moment that the robber's $k$-cell is guaranteed to be `safe' for a reasonably large number of steps. 

Here then is an outline of a strategy for the robber: he plots a straight line from his current $(k-1)$-cell to a $(k-1)$-cell in the  $k$-cell above that he wishes to get to. He runs across each of the $(k-1)$-cells on the way until he reaches his destination; within each $(k-1)$-cell, his path is determined inductively. Of course, there is a problem with this strategy: along the way, a $(k-1)$-cell that the robber needs to run across might not be `safe' when he gets to it, or worse, a $(k-1)$-cell might become `unsafe' while the robber is running through it. To address these issues, the robber alters his path dynamically and detours around any $(k-1)$-cell along his planned straight line path that he finds might become `unsafe' while he is running through it. Our definition of `safety' will ensure that the robber does not have to take too many detours. It will follow, and it is here that we use the fact that a $k$-cell is a $(2k+1)^2 \times (2k+1)^2$ array of $(k-1)$-cells, that the `average speed' of the robber is large despite the fact that he has to take the occasional detour (and detours within detours, and so on). This will provide us with enough elbow room to prove what we need by induction.

We now go about making the above sketch precise. First, we define a sequence $(L_k)_{k \ge 0}$ of natural numbers by setting $L_{k} = \prod_{j=0}^{k}(2j + 1)^2$; clearly a $k$-cell is an $NL_{k} \times NL_{k}$ grid of squares. Next, we define another sequence $(T_k)_{k \ge 0}$ of natural numbers by setting $T_{0} = 1$ and $T_{k} = (2k+1)^2T_{k-1} + CT_{k-1}$ for $k \ge 1$. 

An observation that we shall use repeatedly is that each $k \ge 0$,
\[ 
T_{k} =  L_{k} \prod_{j=1}^{k}\left(1 + \frac{C}{(2j+1)^2}\right) < L_{k}\exp\left(C\sum_{j=1}^{\infty}\frac{1}{(2j+1)^2}\right) <e^{C}L_{k}.
\]

We need to define some notions of `safety'. We say that a $k$-cell is \emph{safe} at some point in time if the number of cops (at that point in time) within the $k$-cell is strictly less than $2^k$. Also, we say that a $k$-cell is \emph{safe for $t$ steps} (at some point) if the set of cops at distance at most $t$ from the $k$-cell has cardinality strictly less than $2^k$. Note that a $k$-cell safe for $t$ steps is necessarily safe for $t'$ steps for every $0 \le t' \le t$ as well.

Next, we say that a square is \emph{$k$-safe} if for each $0 \le k' \le k$, the $k'$-cell containing the square is safe for $T_{k'}$ steps; also, a square is \emph{completely $k$-safe} if it is guaranteed to be $k$-safe after a single cop move.

Notice that if a $k$-cell is safe, then it contains at most one unsafe $(k-1)$-cell. We shall require a straightforward extension of this simple observation. Let us say that two cells are \emph{separated} if they share neither an edge nor a corner. The following proposition is easily proved. 

\begin{proposition}\label{disjoint}
Let $X$ be a $k$-cell and assume that $X$ is safe for $t$ steps where $2t< N L_{k-1}$. If $P$ and $Q$ are a separated pair of $(k-1)$-cells within $X$, then either $P$ or $Q$ is safe for $t$ steps.
\end{proposition}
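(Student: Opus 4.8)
The plan is to exploit the numerical hypothesis $2t < NL_{k-1}$ together with the geometric separation of $P$ and $Q$ to show that the two families of cops relevant to $P$ and to $Q$ are disjoint, and then to finish with a one-line pigeonhole argument built on the identity $2^k = 2\cdot 2^{k-1}$.

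First I would simply unwind the definitions. Since $X$ is safe for $t$ steps, the set $S$ of cops lying at distance at most $t$ from $X$ satisfies $|S| < 2^k$. Write $S_P$ and $S_Q$ for the sets of cops at distance at most $t$ from $P$ and from $Q$ respectively. As $P, Q \subseteq X$, any cop within distance $t$ of $P$ or of $Q$ is within distance $t$ of $X$, so $S_P, S_Q \subseteq S$. Under this notation the conclusion ``$P$ or $Q$ is safe for $t$ steps'' becomes precisely ``$|S_P| < 2^{k-1}$ or $|S_Q| < 2^{k-1}$'', recalling that $P$ and $Q$ are $(k-1)$-cells.

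The crux is to prove that $S_P \cap S_Q = \emptyset$. Here I would use that $P$ and $Q$, being separated $(k-1)$-cells inside the $(2k+1)^2 \times (2k+1)^2$ array $X$, occupy array positions differing by at least $2$ in one of the two coordinate directions (since sharing neither an edge nor a corner rules out king-adjacency). Because each $(k-1)$-cell spans $NL_{k-1}$ consecutive rows and columns, there is a full intervening cell in that direction, so the grid (that is, $\ell_1$) distance between any square of $P$ and any square of $Q$ is at least $NL_{k-1}+1$. If some cop lay in both $S_P$ and $S_Q$, the triangle inequality would force the distance between $P$ and $Q$ to be at most $2t < NL_{k-1}$, a contradiction; hence $S_P$ and $S_Q$ are disjoint.

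Finally, disjointness together with $S_P \cup S_Q \subseteq S$ gives $|S_P| + |S_Q| = |S_P \cup S_Q| \le |S| < 2^k$. Were both $|S_P| \ge 2^{k-1}$ and $|S_Q| \ge 2^{k-1}$, their sum would be at least $2^k$, which is impossible; so at least one of $|S_P|, |S_Q|$ is strictly less than $2^{k-1}$, i.e.\ $P$ or $Q$ is safe for $t$ steps. The only step demanding any care -- and the one I regard as the heart of the matter -- is the distance estimate yielding disjointness, for it is exactly there that the array structure of $X$ and the hypothesis $2t < NL_{k-1}$ are used; everything else is bookkeeping and pigeonhole.
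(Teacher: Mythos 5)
Your proof is correct and follows the same route as the paper: the separation of $P$ and $Q$ forces their distance to be at least $NL_{k-1} > 2t$, so the two sets of nearby cops are disjoint and the pigeonhole bound $|S_P|+|S_Q| < 2^k$ finishes the argument. The paper's version is a two-line sketch of exactly this; you have merely filled in the routine details.
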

\begin{proof}
Simply notice that since $P$ and $Q$ are separated, the distance between them is at least $N L_{k-1}$. Consequently, the set of cops at distance at most $t$ from $P$ and the set of cops at distance at most $t$ from $Q$ are disjoint and the proposition follows.
\end{proof}

To help with the induction, we shall demarcate certain regions as `landing zones'. For $k \ge 1$, the \emph{landing zone} of a $k$-cell is the union of its bottom, top, right and left landing zones; the \emph{bottom landing zone} of a $k$-cell consists of the $3 \times 1$ sub-grid of $(k-1)$-cells at the middle of the bottom edge of the $k$-cell as shown in Figure~\ref{f:landing} and the top, right and left landing zones are analogously defined by symmetry. Also, a square is called a \emph{$k$-landing square}, if the square is contained in the landing zone of each $k'$-cell containing it for $1 \le k' \le k$.

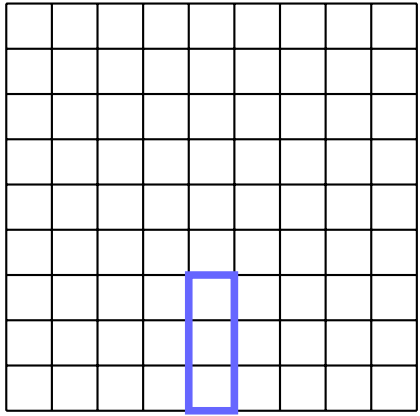
\begin{figure}
\begin{center}
\begin{tikzpicture}[scale=0.6]

\draw[step=1cm,black,thick] (0,0) grid (9,9);
\draw[blue!60!white,line width=1mm] (4,0) rectangle (5,3);

\end{tikzpicture}
\end{center}
\caption{The bottom landing zone of a $1$-cell; each square here represents a $0$-cell.}
\label{f:landing}
\end{figure}

Our proof of Theorem~\ref{t:main} hinges on the following lemma.

\begin{lemma}\label{l:main}
Let $k \ge 1$ and suppose that it is the robber's turn to move. Suppose further that the robber is positioned on a $k$-safe, $k$-landing square inside a $k$-cell $X$. If the $k$-cell $Y$ above $X$ is safe for $2T_{k}+1$ steps, then the robber has a strategy to reach, in at most $T_{k}$ steps and without getting caught, a $k$-landing square in the bottom landing zone of $Y$ which is completely $k$-safe on his arrival there.
\end{lemma}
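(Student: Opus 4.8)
The plan is to prove Lemma~\ref{l:main} by induction on $k$, reducing one level-$k$ move to a bounded-length sequence of level-$(k-1)$ moves. By the fourfold rotational symmetry of the landing-zone structure, the inductive hypothesis at level $k-1$ also provides downward, leftward and rightward versions: from any $(k-1)$-safe, $(k-1)$-landing square the robber can cross into any chosen edge-neighbouring $(k-1)$-cell that is safe for $2T_{k-1}+1$ steps, arriving in at most $T_{k-1}$ steps completely $(k-1)$-safe on a $(k-1)$-landing square of that neighbour. These four atomic crossings are the robber's only moves once $k \ge 2$. The base case $k=1$ is where the robber's speed enters: here $X$ is a $9 \times 9$ array of $N \times N$ $0$-cells and there is no hypothesis to invoke, but since $R > 50N$ and $T_0 = 1$ the robber can cross an entire $0$-cell and reposition inside an adjacent one in a single step, and a $0$-cell safe for one step has no cop within distance $1$, so the atomic crossing costs one step and the routing argument below applies verbatim.

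From his entry square, which lies in the landing zone of $X$, I would have the robber fix a target $(k-1)$-cell inside the bottom landing zone of $Y$ together with a shortest lattice path of $(k-1)$-cells towards it. Since the landing zone sits at the midpoints of the edges, this path has length at most $(2k+1)^2$, the extremal case being entry through the bottom landing zone, which requires exactly $(2k+1)^2$ upward crossings. He walks the path one cell at a time, and the generality of the landing-square hypothesis is exactly what makes turns free: each atomic crossing succeeds regardless of which landing zone the previous crossing deposited him in. Before each crossing he checks whether the next cell is safe for $2T_{k-1}+1$ steps; if not, Proposition~\ref{disjoint} --- applicable because $X$ is safe for $T_k > 2T_{k-1}+1$ steps and $2(2T_{k-1}+1) < NL_{k-1}$ --- furnishes a safe cell separated from the blocked one, around which he reroutes.

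The safety bookkeeping is then routine. As the whole journey takes at most $T_k$ steps and $X$ is safe for $T_k$ steps, $X$ stays safe throughout; and as $Y$ is safe for $2T_k+1$ steps, on arrival (after at most $T_k$ steps) it is still safe for at least $T_k+1$ steps, hence safe for $T_k$ steps after one further cop move. Together with the completely $(k-1)$-safe arrival handed over by the inductive hypothesis this makes the landing square completely $k$-safe, and since it lies in the bottom landing zone of $Y$ and is a $(k-1)$-landing square it is automatically a $k$-landing square.

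The hard part will be bounding the cost of the reroutes by $CT_{k-1}$, that is, by $C$ extra crossings. Two facts drive this. First, a $(k-1)$-cell failing to be safe for $2T_{k-1}+1$ steps has at least $2^{k-1}$ cops near it while $X$ holds fewer than $2^k$, so by Proposition~\ref{disjoint} no two separated cells can both be unsafe and hence at every instant all unsafe $(k-1)$-cells lie inside a single $2 \times 2$ block. Second, during the at most $T_{k-1}$ steps of one crossing the robber advances a full $(k-1)$-cell, a distance $NL_{k-1}$, whereas a cop moves at most $T_{k-1} < e^{C}L_{k-1} < NL_{k-1}/100$; so the cops drift less than a hundredth of a cell per crossing and can never overtake the robber. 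Consequently, once he has rerouted past the single bad block he never meets it again, so he detours only $O(1)$ times at a cost of $O(1)$ crossings apiece --- comfortably within the slack of $C \ge 40$ crossings that $T_k = ((2k+1)^2 + C)T_{k-1}$ was designed to provide. Pinning this constant down and checking it against $C$ is the one genuinely delicate estimate. The total number of crossings is then at most $(2k+1)^2 + C$, each costing at most $T_{k-1}$ steps, for at most $T_k$ steps in all, which closes the induction.
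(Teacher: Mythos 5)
Your overall skeleton --- induction on $k$, one level-$k$ move decomposed into at most $(2k+1)^2 + C$ level-$(k-1)$ crossings, dynamic detours justified by Proposition~\ref{disjoint}, and the observation that cops advance at most $T_{k-1} \ll NL_{k-1}$ per crossing so the unsafe region can never overtake the robber --- is exactly the paper's. Your base case is handled differently: the paper runs the whole $k=1$ traversal in a single robber turn along one of $N$ disjoint paths, exploiting that at most two cops sit in $X \cup Y$, whereas you introduce a level-$0$ atomic crossing and reuse the routing argument. Your version appears to be sound and makes the induction more uniform.

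There are, however, two genuine gaps. First, your key invariant --- ``at every instant all unsafe $(k-1)$-cells lie inside a single $2\times 2$ block'' --- is derived via Proposition~\ref{disjoint} from the cop budget of $X$ alone, and it fails precisely at the step where the robber must cross from $X$ into $Y$: cells near the bottom of $Y$ can be rendered unsafe by the independent, equally large budget of fewer than $2^k$ cops counted against $Y$, so two unsafe blocks can be in play simultaneously, one in each $k$-cell. Moreover, at that final crossing a generic ``reroute around the bad cell'' is not good enough, because the robber must land inside the three-cell bottom landing zone of $Y$ and a sideways detour takes him out of it. The paper deals with both points by splitting the journey into two legs and using a separate, static argument for the last stretch: among five prescribed paths from the top landing zone of $X$ into the bottom landing zone of $Y$, at least one consists entirely of cells safe for $14T_{k-1}+1$ steps, and it is traversed in at most $13T_{k-1}$ steps. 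Second, you leave the detour cost as an unverified ``$O(1)$ detours at $O(1)$ crossings apiece''; since $C=40$ is fixed and the slack at every level is exactly $C$ crossings, this constant must actually be computed. The paper shows the first leg needs at most \emph{one} detour (mark the $2^{k-1}$ cops witnessing the unsafe cell; after the detour they lie strictly below the robber and fall further behind, while the remaining fewer than $2^{k-1}$ cops cannot create a new unsafe cell on their own), costing $7T_{k-1}$ in place of $3T_{k-1}$, so that the total is at most $(2k+1)^2T_{k-1}+16T_{k-1}\le T_k$. You also do not address the detour at the corner of the L-shaped path when the robber enters through a side landing zone, which the paper notes requires a slightly longer reroute.
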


Let us point out that there is some asymmetry in how Lemma~\ref{l:main} is stated. The lemma assumes something about the grid when the robber is about to move, and says something about the grid after a sequence of moves ending with a move made by the robber. However, note that a square is completely $k$-safe only if it is $k$-safe after a single cop move; hence, if the robber moves using the strategy given by Lemma~\ref{l:main}, then no matter how the cops move on their turn following his final move, his new location is $k$-safe (and the lemma may be applied once again).

\begin{proof}[Proof of Lemma~\ref{l:main}]
Note that Lemma~\ref{l:main} is really a collection of four different statements, one each for when the robber starts in the bottom, top, right and left landing zones of his $k$-cell $X$. Indeed, Lemma~\ref{l:main} says that under certain conditions, it is possible for the robber to safely move from the landing zone of a $k$-cell to (the landing zone of) any of its four neighbouring $k$-cells in $T_{k}$ steps.

We prove the lemma by induction on $k$. The case $k=1$ is easy to check. Assume that it is the robber's turn to move, that he is on a $1$-safe square in the landing zone of his $1$-cell $X$, and that the $1$-cell $Y$ above him is safe for $2T_{1} + 1 =19 + 2C$ steps. We need to show that he can move in at most $T_{1}$ steps to a square in the bottom landing zone of $Y$ which is completely $1$-safe on his arrival. The robber can in fact do this in one step as we now describe. 

Since the robber's square is $1$-safe, note there are no cops in his $0$-cell, say $P$. Consider a pair of separated $0$-cells, call them $Q$ and $Q'$, in the bottom landing zone of $Y$. Note that at least one of $Q$ or $Q'$, say $Q$, must be safe for two steps because if not, then since $4 < NL_{0}= N$, it follows from Proposition~\ref{disjoint} that $Y$ is not safe for two steps, contradicting our assumption that $Y$ is safe for $19 + 2C$ steps with room to spare.

Note that a $1$-cell is a $9 \times 9$ array of $0$-cells. Since a $0$-cell is an $N \times N$ grid, it is easy to see that that there are $N$ disjoint paths, each wholly contained within the union of $X$ and $Y$ and of length at most $36N$, from $P$ to any $0$-cell in $Y$. Since both $X$ and $Y$ are safe when the robber is about to move, there are at most two cops in total within $X$ and $Y$. Hence, there are at least $N-2$ paths between $P$ and $Q$ containing no cops on them. Note that the speed of the robber $R$ is greater than $36N$, and hence the robber, on his turn, can follow one of these $N-2$ paths from his square in $P$ to a square in $Q$. Note that $Q$ is safe for two steps and $Y$ is safe for $19 + 2C \ge T_{1}+1$ steps; hence, it clear that any square in $Q$ is completely $1$-safe on the robber's arrival there.

Now assume $k>1$ and that we have proved the claim for each $1 \le k' < k$. We describe the robber's strategy when he starts in the bottom landing zone of his $k$-cell. The strategy for the three other landing zones are very similar and we only highlight the very minor differences.

We shall divide the robber's journey into two parts. We first describe how the robber should travel from the bottom landing zone of his $k$-cell $X$ to the top landing zone of $X$. This journey will require at most $(2k+1)^2T_{k-1} + 3T_{k-1}$ steps. We then show that robber can dash across from the top landing zone of $X$ into the bottom landing zone of $Y$ in at most $13T_{k-1}$ steps. Hence, the total number of steps required will be bounded above by 
\[ (2k+1)^2T_{k-1} + 16T_{k-1} \le (2k+1)^2T_{k-1} + CT_{k-1} \le T_{k}\] 
as required.

In what follows, when we speak of the robber arriving at a square in a $(k-1)$-cell, it is implied that the square is a $(k-1)$-landing square.

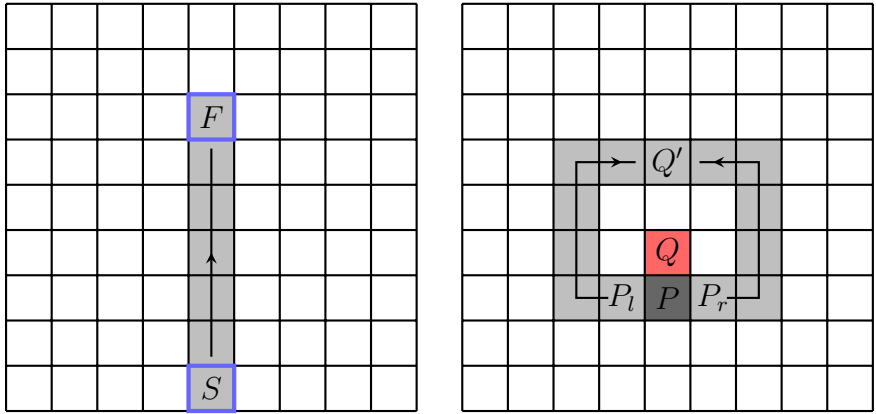
\begin{figure}
\begin{center}
\begin{tikzpicture}[scale=0.6]

\fill[black!25!white] (4,0) rectangle (5,7);
\draw[step=1cm,black,thick] (0,0) grid (9,9);
\draw[blue!60!white,ultra thick] (4,0) rectangle (5,1);
\draw[blue!60!white,ultra thick] (4,6) rectangle (5,7);

\fill[black!25!white] (12,2) rectangle (17,3);
\fill[black!25!white] (12,5) rectangle (17,6);
\fill[black!25!white] (12,3) rectangle (13,5);
\fill[black!25!white] (16,3) rectangle (17,5);
\fill[black!60!white] (14,2) rectangle (15,3);
\fill[red!60!white] (14,3) rectangle (15,4);
\draw[step=1cm,black,thick] (10,0) grid (19,9);

\node[draw=none,fill=none] at (4.5,0.5) {$S$};
\node[draw=none,fill=none] at (4.5,6.5) {$F$};

\node[draw=none,fill=none] at (14.5,2.5) {$P$};
\node[draw=none,fill=none] at (13.5,2.5) {$P_l$};
\node[draw=none,fill=none] at (15.5,2.5) {$P_r$};
\node[draw=none,fill=none] at (14.5,3.5) {$Q$};
\node[draw=none,fill=none] at (14.5,5.5) {$Q'$};

\draw[thick,postaction={decorate,decoration={markings,
		mark=at position 0.5 with {\arrow[arrowstyle]{stealth}}}}] (4.5, 1.2)--(4.5, 5.8);

\draw[thick,postaction={decorate,decoration={markings,
    mark=at position 0.94 with {\arrow[arrowstyle]{stealth}}}}] (15.8, 2.5)--(16.5, 2.5)--(16.5,5.5)--(15.2,5.5);
\draw[thick,postaction={decorate,decoration={markings,
    mark=at position 0.94 with {\arrow[arrowstyle]{stealth}}}}] (13.2, 2.5)--(12.5, 2.5)--(12.5,5.5)--(13.8,5.5);

\end{tikzpicture}
\end{center}
\caption{The planned path to the top landing zone, and the detouring strategy.}
\label{f:detour}
\end{figure}

The robber begins by plotting a straight line path from his $(k-1)$-cell, say $S$, to the nearest $(k-1)$-cell, say $F$, in the top landing zone of $X$ as shown in Figure~\ref{f:detour}. The robber's square is $k$-safe; this means that $X$ is safe for $T_{k}$ steps and that the robber's square is $(k-1)$-safe. If the $(k-1)$-cell above him is safe for $2T_{k-1} + 1$ steps, then the robber may inductively run, in at most $T_{k-1}$ steps and without getting caught, to a square in the $(k-1)$-cell above him which is completely $(k-1)$-safe on his arrival there. Following the subsequent cop turn, his square is $(k-1)$-safe. The robber may repeat this process until he gets to $F$, provided that every time the robber arrives at a $(k-1)$-cell (and the cops have subsequently moved), the $(k-1)$-cell above is safe for $2T_{k-1} + 1$ steps at that point. In this case, the robber reaches the top landing zone of $X$ in at most $(2k+1)^2T_{k-1}$ steps, and we are done.

So we may assume that at some stage of his journey, the robber is on a $(k-1)$-safe square in a $(k-1)$-cell $P$ within $X$, it is his turn to move, and that the $(k-1)$-cell $Q$ above $P$ is not safe for $2T_{k-1} + 1$ steps. We claim that the robber only has to deal with such a situation once.

Let us consider the first time such a situation arises. Clearly, the robber has taken at most $(2k+1)^2T_{k-1}$ steps from $S$; as $X$ was safe for $T_{k} = (2k+1)^2T_{k-1} + CT_{k-1}$ steps to begin with, $X$ is now safe for at least $CT_{k-1}$ steps. The robber takes a detour around $Q$ as follows. He considers the two paths around $Q$ to a $(k-1)$-cell $Q'$ located above (and separated from) $Q$ as shown in Figure~\ref{f:detour}; call these paths $\C{Z}_l$ and $\C{Z}_r$. We claim that each of the $(k-1)$-cells along one of these two paths is safe for $8T_{k-1} + 1$ steps. Indeed, all the $(k-1)$-cells on these paths with the exception of the two initial $(k-1)$-cells $P_l$ and $P_r$ are separated from $Q$. If one of these $(k-1)$-cells is not safe for $8T_{k-1} + 1$ steps, then since $Q$ is not safe for $2T_{k-1} + 1$ steps and
\[2(8T_{k-1} + 1) \le 18T_{k-1} < 18e^CL_{k-1} < NL_{k-1},\]
it follows by Proposition~\ref{disjoint} that $X$ is not safe for $8T_{k-1} + 1 \le 9T_{k-1}$ steps, contradicting the fact that $X$ is in fact, safe for $CT_{k-1}$ steps. Again, by Proposition~\ref{disjoint}, one of $P_l$ and $P_r$ is necessarily safe for $8T_{k-1} + 1$ steps since $P_l$ and $P_r$ are separated. So suppose that all the $(k-1)$-cells along $\C{Z}_l$ are safe for $8T_{k-1} + 1$ steps. Then it is easy to check that the robber may inductively run along $\C{Z}_l$, in at most $7T_{k-1}$ steps and without getting caught, from $P$ to $Q'$ so that he reaches a square in $Q'$ which is completely $(k-1)$-safe on his arrival there.

We now show that this situation arises at most once. Indeed, since $Q$ was not safe for $2T_{k-1}+1$ steps when the robber was at $P$, we know that there are at least $2^{k-1}$ cops at distance at most $2T_{k-1}+1$ from $Q$; let us mark these cops. The robber takes $7T_{k-1}$ steps to reach $Q'$ from $P$. In those $7T_{k-1}$ steps, the $2^{k-1}$ marked cops may move at most $7T_{k-1}$ steps up. However, since the distance between $Q$ and $Q'$ is $NL_{k-1} > 100e^CL_{k-1} > 100 T_{k-1}$, it is clear that these $2^{k-1}$ marked cops can never overtake the robber vertically, and hence the robber will, after this detour, always find that when he arrives at a $(k-1)$-cell, the $(k-1)$-cell above him is safe for $2T_{k-1}+1$ steps.

It is therefore clear that the robber can safely reach some $(k-1)$-cell $F$ in the top landing zone of $X$ (though, on account of his detours, not necessarily his initial choice) in at most $(2k+1)^2T_{k-1} + 3T_{k-1}$ steps. This completes the first leg of the robber's journey.

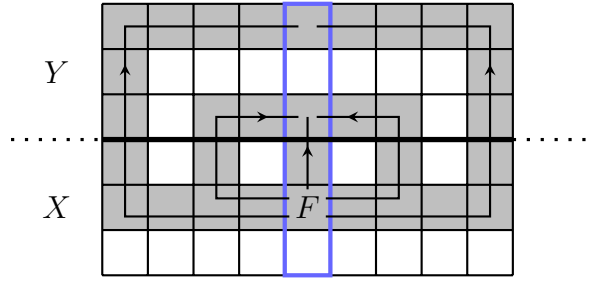
\begin{figure}
\begin{center}
\begin{tikzpicture}[scale=0.6]

\fill[black!25!white] (0,1) rectangle (9,2);
\fill[black!25!white] (2,3) rectangle (7,4);
\fill[black!25!white] (0,5) rectangle (9,6);
\fill[black!25!white] (0,1) rectangle (1,6);
\fill[black!25!white] (8,1) rectangle (9,6);
\fill[black!25!white] (2,1) rectangle (3,4);st
\fill[black!25!white] (6,1) rectangle (7,4);
\fill[black!25!white] (4,2) rectangle (5,3);

\draw[step=1cm,black,thick] (0,0) grid (9,6);
\draw[blue!60!white,ultra thick] (4,0) rectangle (5,3);
\draw[blue!60!white,ultra thick] (4,3) rectangle (5,6);
\draw[black,line width=2pt](0,3)--(9,3);
\draw[loosely dotted, very thick] (-2, 3) -- (0,3);
\draw[loosely dotted, very thick] (9, 3) -- (11,3);

\draw[thick,postaction={decorate,decoration={markings,
    mark=at position 0.6 with {\arrow[arrowstyle]{stealth}}}}] (4.5, 1.9)--(4.5, 3.5);
\draw[thick,postaction={decorate,decoration={markings,
    mark=at position 0.872 with {\arrow[arrowstyle]{stealth}}}}] (4.1, 1.7)--(2.5, 1.7)--(2.5,3.5)--(4.3,3.5);
\draw[thick,postaction={decorate,decoration={markings,
    mark=at position 0.872 with {\arrow[arrowstyle]{stealth}}}}] (4.9, 1.7)--(6.5, 1.7)--(6.5,3.5)--(4.7,3.5);

\draw[thick,postaction={decorate,decoration={markings,
    mark=at position 0.6 with {\arrow[arrowstyle]{stealth}}}}] (4.1, 1.3)--(0.5, 1.3)--(0.5,5.5)--(4.3,5.5);
\draw[thick,postaction={decorate,decoration={markings,
    mark=at position 0.6 with {\arrow[arrowstyle]{stealth}}}}] (4.9, 1.3)--(8.5, 1.3)--(8.5,5.5)--(4.7,5.5);
\node[draw=none,fill=none] at (4.5,1.5) {$F$};

\node[draw=none,fill=none] at (-1,1.5) {$X$};
\node[draw=none,fill=none] at (-1,4.5) {$Y$};
\end{tikzpicture}
\end{center}
\caption{The final stretch from $F$ to the bottom landing zone of $Y$.}
\label{f:path}
\end{figure}

Let us now pause and survey the robber's situation after the cops have moved. He is now on a $(k-1)$-safe, $(k-1)$-landing square in a $(k-1)$-cell $F$ in the top landing zone of his $k$-cell $X$. Also, $X$ is now safe for at least $(C-3)T_{k-1}$ steps and $Y$, the $k$-cell above $X$, is safe for at least $T_{k} + (C-3)T_{k-1} + 1$ steps. 

We now show that the robber can safely reach, in at most $13T_{k-1}$ steps, a square in the bottom landing zone of $Y$ which is completely $(k-1)$-safe when the robber arrives there; that this square is also completely $k$-safe follows from the fact that $Y$ is safe for at least $T_{k} + (C-3)T_{k-1} + 1$ steps before the robber starts the second leg of his journey.

It is possible to consider a set of five paths, as shown in Figure~\ref{f:path}, from $F$ to the $(k-1)$-cells in the bottom landing zone of $Y$, and show using Proposition~\ref{disjoint}, the fact that $X$ is safe for $(C-3)T_{k-1}$ steps, and the fact that $Y$ safe for $T_{k} + (C-3)T_{k-1} + 1$ steps, that all the $(k-1)$-cells along one of these five paths are all safe for $14T_{k-1} + 1$ steps. Since each of these five paths is composed of at most thirteen $(k-1)$-cells, it is clear that the robber can then complete his journey by following one of these paths in at most $13T_{k-1}$ steps. This completes the second leg of the robber's journey.

Clearly, this also shows how the robber may proceed if he is initially located in the top landing zone of $X$. 
A similar strategy to the what has just been described (see Figure~\ref{f:left}) can be easily shown to work when the robber starts on either the right or the left landing zone of $X$; the robbers path becomes slightly longer than before if he needs to make a detour as he is `turning', but our choices of $C$ and $N$ are large enough to ensure that the detouring strategy works with room to spare.
\end{proof}

Armed with Lemma~\ref{l:main}, it is a simple exercise to deduce Theorem~\ref{t:main}.

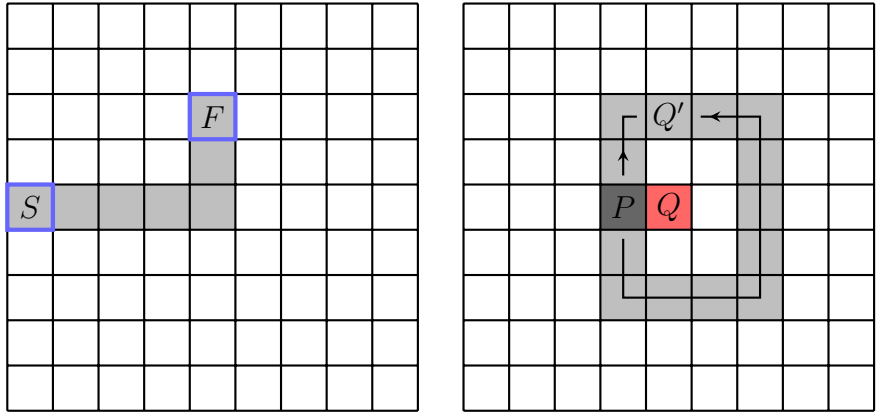
\begin{figure}
\begin{center}
\begin{tikzpicture}[scale=0.6]

\fill[black!25!white] (0,4) rectangle (5,5);
\fill[black!25!white] (4,5) rectangle (5,7);
\draw[step=1cm,black,thick] (0,0) grid (9,9);
\draw[blue!60!white,ultra thick] (0,4) rectangle (1,5);
\draw[blue!60!white,ultra thick] (4,6) rectangle (5,7);

\fill[black!25!white] (13,2) rectangle (17,3);
\fill[black!25!white] (14,6) rectangle (17,7);
\fill[black!25!white] (13,3) rectangle (14,4);
\fill[black!25!white] (16,3) rectangle (17,6);
\fill[black!25!white] (13,5) rectangle (14,7);
\fill[black!60!white] (13,4) rectangle (14,5);
\fill[red!60!white] (14,4) rectangle (15,5);
\draw[step=1cm,black,thick] (10,0) grid (19,9);

\node[draw=none,fill=none] at (0.5,4.5) {$S$};
\node[draw=none,fill=none] at (4.5,6.5) {$F$};

\node[draw=none,fill=none] at (14.5,6.5) {$Q'$};
\node[draw=none,fill=none] at (14.5,4.5) {$Q$};
\node[draw=none,fill=none] at (13.5,4.5) {$P$};

\draw[thick,postaction={decorate,decoration={markings,
    mark=at position 0.98 with {\arrow[arrowstyle]{stealth}}}}] (13.5,3.8)--(13.5, 2.5)--(16.5, 2.5)--(16.5,6.5)--(15.2,6.5);
\draw[thick,postaction={decorate,decoration={markings,
    mark=at position 0.35 with {\arrow[arrowstyle]{stealth}}}}] (13.5,5.2)--(13.5, 6.5)--(13.8,6.5);
    
\end{tikzpicture}
\end{center}
\caption{Getting to the top landing zone from the left landing zone.}
\label{f:left}
\end{figure}

\begin{proof}[Proof of Theorem~\ref{t:main}]
We show that if $n \ge 2NL_{k}$ for some $k \ge 1$, then $f_R (n) \ge 2^k$; since $L_{k} = \prod_{j = 0}^k(2j+1)^2 = \exp(O(k\log k))$, this implies the result.

Since $n \ge 2NL_{k}$, we may fix a $2 \times 2$ array of $k$-cells in the grid. If the number of cops on the grid is strictly less than $2^k$, each of these $k$-cells is guaranteed to be safe forever. After the cops have placed themselves on the grid, the robber positions himself on a $k$-safe, $k$-landing square in one of these four $k$-cells; that the robber can actually find such a square is easily checked by Proposition~\ref{disjoint}. The robber now wins by repeatedly using Lemma~\ref{l:main} to run around this $2 \times 2$ array in a clockwise loop forever.
\end{proof}

\section{Upper bounds}\label{s:upper}
We remarked earlier that $f_R(n) \le n$ for all $n \in \N$. Here, we sketch how this trivial bound may be improved slightly.

\begin{proposition}\label{p:upper}
For each $R \in \N$, we have
\[
f_R(n) \le  n \left( \frac{2R-2}{2R-1} \right) + O(1)\]
for all sufficiently large $n \in \N$.
\end{proposition}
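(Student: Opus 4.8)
The plan is to refine the trivial marching-wall strategy that already gives $f_R(n) \le n$. Recall how that bound is obtained: $n$ cops occupy a solid horizontal row spanning all $n$ columns and advance one row per turn; since the robber can never pass through a fully occupied row, however fast he is, he is herded against the top edge and caught. To save cops I would keep this flat advancing wall but delete one cop from every block of $2R-1$ consecutive columns, leaving $m = \lfloor n/(2R-1)\rfloor$ evenly spaced single-column \emph{gaps} and hence using only $n-m = n\tfrac{2R-2}{2R-1} + O(1)$ cops, matching the bound in Proposition~\ref{p:upper}.

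The first real step is to pin down exactly when the speed-$R$ robber can exploit a gap. Suppose the wall sits at row $h$ and a gap lies at column $g$. From a square $(x,h+1)$ just above the wall the robber needs a walk of length $|x-g|+2$ to dip through the gap and reach $(g,h-1)$ strictly below the wall, so he can cross in a single move if and only if $|x-g| \le R-2$; crossing only as far as the gap square $(g,h)$ is useless to him, since the adjacent cops can then fill the gap onto him and catch him. Thus the robber threatens only gaps lying in a \emph{danger window} of width $2(R-2)+1 = 2R-3$ centred on his column, and only while he hovers exactly one row above the wall. Since the spacing $2R-1$ strictly exceeds $2R-3$, at each instant there is a legal phase of the gap pattern for which no gap sits in the danger window; this one-column of slack is precisely what the construction is built around.

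I would then analyse a cop policy of the following shape: advance the entire wall one row on any turn when the robber sits at height at least $h+2$ (he cannot cross on his next move, so the progress is free), and whenever he descends to row $h+1$, forgo advancing and instead shuffle the gaps one column so that the danger window is cleared. The goal is to prove two things: that after every cop move the danger window is gap-free (so the robber can never actually cross), and that the wall nonetheless reaches the top edge in $O(n)$ turns, at which point the robber is cornered and caught. The left and right boundary columns, and the final cornering, should then be absorbed into the additive $O(1)$.

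The hard part will be the progress guarantee, and this is where the constant $2R-1$ is genuinely pinned down. The difficulty is that the required phase of the gap pattern must track the robber's column modulo $2R-1$, but the gaps move only one column per turn while the robber moves up to $R$ columns per turn. A naive herding argument therefore fails: a robber who repeatedly dives to row $h+1$ next to the nearest gap can try to force an unbounded run of non-advancing shuffle turns, or to drive a fleeing gap into the boundary where it can no longer dodge. The crux is an amortised estimate showing that each descent by the robber buys the cops enough re-approach time to bank strictly more advancing turns than shuffling turns, so that a forced run of shuffles is always short; the spacing $2R-1$ is exactly the largest value for which this amortisation closes (a wider spacing would lengthen the forced-shuffle runs past the point where net upward progress can be maintained). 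I expect the remaining points—the precise cop-before-robber move ordering and the turning at the boundary so that no gap is ever pinned—to be routine bookkeeping once this balance is in place.
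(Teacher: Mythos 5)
Your strategy has a fatal leak at its core. The wall with single-column gaps spaced $2R-1$ apart cannot be defended by a one-column shuffle, because the move order gives the cops only \emph{one} move between the robber parking himself next to a gap and the robber's crossing attempt. Concretely: the robber ends his turn at $(g,h+1)$ where $g$ is a gap column of the wall at row $h$ (he can see the gaps and has speed $R$, so reaching such a square is easy). On their turn the cops can shift the gap pattern by at most one column, so afterwards the nearest gap is at horizontal distance $1$ from the robber, and he then crosses with a walk of length $3 \le R$. To actually block him the cops would need no gap within distance $R-2$ of his column after their single move, but a gap that starts at distance $0$ can only be pushed to distance $1$. So the danger window of width $2R-3$ cannot be cleared in one move when the robber plants himself at its centre, and no amortised progress argument can repair this --- the issue is not that the wall advances too slowly, but that it is porous. (Two smaller problems: your claim that the robber threatens only from row $h+1$ is also false --- he can cross from any square in the $\ell_1$-ball of radius $R-1$ around a gap that lies above the wall --- and stopping \emph{on} the gap square is not obviously ``useless,'' since after the adjacent cop fills it the wall has a new gap; but these are moot given the main flaw.)

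For contrast, the paper avoids gaps entirely: it uses a \emph{solid} contiguous line of $2L+1$ cops with $L \ge n(R-1)/(2R-1)+2$, which steps down whenever the robber's column is within $R$ of the centre cop (or when the line is pinned against a side of the grid), and otherwise slides one column toward the robber. The invariant maintained is that the robber stays strictly below the line, and the computation showing he cannot outflank either end is exactly where the constant $(2R-2)/(2R-1)$ comes from: the robber moves $R$ columns per turn while the line recentres at speed $1$, and $L$ is chosen so that the line reaches the boundary (and descends) before the robber can get around it. If you want to salvage a gap-based wall, the gaps would have to be guarded by extra cops hovering behind them, which is a different (and more delicate) construction.
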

\begin{proof}
We describe a winning strategy for $2L+1$ cops where $L$ is an integer such that 
\[L \ge n\left(\frac{R-1}{2R-1}\right) + 2.
\]

The cops initially arrange themselves in a line on the top row of the grid as shown in Figure~\ref{f:wedge}. We write $(x_r,y_r)$ and $(x_c,y_c)$ respectively for the positions of the robber and the central cop in the formation, with the convention that the bottom left square of the grid is $(1,1)$.

The cops all move together as follows. When the cops have to move, they all step down if either
\begin{enumerate}
\item $|x_c - x_r| \le R$, or
\item $x_c = L + 1$ and $x_r < x_c - R$, or
\item $x_c = n - L - 1$ and $x_r > x_c + R$.
\end{enumerate}
If none of those conditions are satisfied, then the cops all step to the left if $x_c > x_r$ and to the right otherwise.

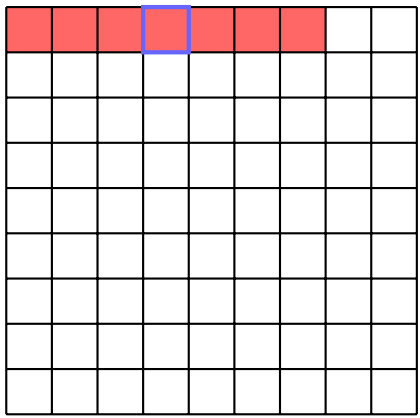
\begin{figure}
\begin{center}
\begin{tikzpicture}[scale=0.6]

\fill[red!60!white] (0,8) rectangle (7,9);
\draw[step=1cm,black,thick] (0,0) grid (9,9);

\draw[blue!60!white,ultra thick] (3,8) rectangle (4,9);

\end{tikzpicture}
\end{center}
\caption{The initial formation.}
\label{f:wedge}
\end{figure}

Observe that since the robber may move at most $R$ steps in his turn, if the quantity $x_c - x_r$ changes sign in some interval, then we must have $|x_c - x_r| \le R$ after one of the robber's turns in that interval, at which point all the cops step down.

First, we claim that cops eventually leave the top row. Indeed, we may assume that $x_r > x_c + R$ after the robber's first turn, for if not the cops step down on their first turn. Now, the cops begin moving to the right one step at a time, as dictated by their strategy. By our earlier observation, we may assume that we have $x_r > x_c$ until the line of cops reaches the right edge of the grid. However, when the cops reach the right edge, we have $x_c = n - L - 1$ and $x_r > x_c$, so the cops step down on their next turn.

Notice that robber has to be in a column occupied by one of the cops when the cops step down. Since the cops start on the top row, we may assume that $y_r < y_c$ after the cops step down for the first time, for the robber would have been caught otherwise. 

We claim that this inequality, namely $y_r < y_c$, is now maintained by the strategy until the robber is caught. To see this, suppose otherwise and let $(x^1_r, y)$ and $(x^1_c, y)$ denote the positions of the robber and the central cop at the first instance at which we have $y_r \ge y_c$; here, $(x^1_r, y)$ may be an intermediate position in the robber's trajectory, i.e., a position occupied by the robber at some point \emph{during} his turn but perhaps not the position occupied by the robber at the end of his turn. Clearly, we must either have $x^1_r > x^1_c + L$ or $x^1_r < x^1_c - L$; without loss of generality, let us assume the former.

Let $(x^0_c, y)$ be the position of the central cop at the time when the cops first step down to the $y$th row, and let $(x^0_r, y')$ be the robber's position at that time. Consider the moves made by the cops during the robber's journey from $(x^0_r, y')$ to $(x^1_r, y)$. There are no steps down in this interval as this would contradict the maximality of $y$. Since $x^1_r > x^1_c$, we must have had $x_r > x_c$ the entire time, so each move in this interval is a step to the right. In particular, we must have had $x_c  < n-L-1$ at all times in this interval, so we conclude that $x^0_c \le x^0_r \le x^0_c + R$.

The robber needs at least $(x^1_r - x^0_r)/R$ steps to get from $(x^0_r, y')$ to $(x^1_r, y)$. In this period, the cops move at least $(x^1_r - x^0_r)/R - 1$ times to the right. Therefore, 
\[ x^1_c - x^0_c \ge \frac{x^1_r - x^0_r}{R} - 1 > \frac{(x^1_c + L) -  (x^0_c + R)}{R} - 1,
\]
which implies that
\[ \frac{L}{R} < (x^1_c - x^0_c)\left(\frac{R-1}{R} \right)+ 2  < (n - 2L)\left(\frac{R-1}{R}\right) + 2.\]
This is equivalent to
\[ 
L\left(\frac{2R-1}{R}\right) < n \left(\frac{R-1}{R}\right) + 2;
\]
this contradicts the fact that $L \ge n(R-1)/(2R-1) + 2$.

To finish the proof, we may argue (as we did for the top row) that the cops cannot be stuck in any row indefinitely, and they therefore catch the robber when they step down into the bottom row.
\end{proof}

The argument used to prove Proposition~\ref{p:upper} is by no means close to best-possible. For instance, by considering a similar strategy to the one in the proof where the cops begin by arranging themselves in a downward-pointing wedge instead of a straight line, one can show that
\[
f_R(n) \le  n \left( \frac{R-1}{R+1} \right) + O(1).
\]
However, our argument for proving this bound is somewhat tedious, so we omit the proof and settle for the slightly weaker bound in Proposition~\ref{p:upper} since the purpose of the proposition is merely to demonstrate that the trivial bound of $f_R(n) \le n$  is not tight.

\section{Conclusion}\label{s:end}
It seems exceedingly unlikely that Theorem~\ref{t:main} is close to the truth; as we conjectured earlier, it should be the case that there exists an $R \in \N$ for which $f_R (n) = n^{1 - o(1)}$ as $n \to \infty$. 

Our proof of Theorem~\ref{t:main} is built on ideas used to solve Conway's angel problem in three dimensions. We conclude by mentioning that it is not inconceivable that one can, by suitably adapting one of the solutions (see~\citep{angel2d-1, angel2d-2, angel2d-3}) to Conway's problem in two dimensions, prove the existence of an $R \in \N$ and a $c_R > 0$ such that $f_R(n) \ge n^{c_R}$ for all sufficiently large $n \in \N$.

\section*{Acknowledgements}
The first and second authors were partially supported by NSF grant DMS-1301614 and the second author also wishes to acknowledge support from EU MULTIPLEX grant 317532.

\bibliographystyle{amsplain}
\bibliography{fast_robber}

\end{document}